\documentclass{amsart}
\usepackage{amsmath}
\usepackage{amssymb}
\usepackage{amsthm}
\usepackage{amsfonts}
\usepackage{graphicx}
\textwidth  15cm
\textheight 23cm

\oddsidemargin 1cm

\newtheorem{Theorem}{Theorem}[section]

\newtheorem{Definition}[Theorem]{Definition}
\newtheorem{Corollary}[Theorem]{Corollary}
\newtheorem{Lemma}[Theorem]{Lemma}
\newtheorem{Remark}[Theorem]{Remark}
\theoremstyle{definition}
\newtheorem{example}[Theorem]{Example}

\def \qed{\hfill{\hbox{$\square$}}}

\numberwithin{equation}{section}
\title{Timelike Loxodromes on Lorentzian Helicoidal Surfaces in Minkowski n--Space}

\author[B. Bekta\c s]{Burcu Bekta\c s Demirci}
\address{Fatih Sultan Mehmet Vak{\i}f University, Hali\c{c} Campus, Faculty of Engineering,
Department of Civil Engineering, 34445, Beyo\u{g}lu, İstanbul, Turkey, http://orcid.org/0000-0002-5611-5478.}
\email{bbektas@fsm.edu.tr(Corresponding author)}

\author[M. Babaarslan]{Murat Babaarslan}
\address{Yozgat Bozok University, Department of Mathematics, 66100, Yozgat, Turkey, http://orcid.org/0000-0012-2770-4126.}
\email{murat.babaarslan@bozok.edu.tr}

\author[Z. Öge]{Zehra \"{O}ge}
\address{Yozgat Bozok University, Graduate School of Natural and Applied Sciences, Department of Mathematics, 66100, Yozgat, Turkey, http://orcid.org/0000-0001-8184-4173}
\email{zehrahasimm66@gmail.com}

\begin{document}
\maketitle

\begin{abstract}
In this paper, we examine timelike loxodromes on three kinds of Lorentzian helicoidal surfaces 
in Minkowski $n$--space. 
First, we obtain the first order ordinary differential equations which determine timelike loxodromes 
on the Lorentzian helicoidal surfaces in $\mathbb{E}^n_1$ 
according to the causal characters of their meridian curves. 
Then, by finding general solutions, we get the explicit parametrizations of such timelike loxodromes. 
In particular, we investigate the timelike loxodromes on the three kinds of Lorentzian right
helicoidal surfaces in $\mathbb{E}^n_1$. Finally, we give an example to visualize the results. 
\end{abstract}
\textit{Keywords: Loxodromes (rhumb lines), navigation, rotational surfaces, helicoidal surfaces, Minkowski space.}\\
\textit{MSC Classification: 53B25, 53C50.}

\section{Introduction}
Loxodromes, which are also known as rhumb lines, are curves that make 
constant angles with the meridians on the Earth's surface. 
Geodesics which minimize the distance between two points
on Earth's surface, are different from than loxodromes on Earth's surface, \cite{TW}.
Only the equator and the meridians are both constant course angle and length minimizing.  
Since loxodromes give an efficient routing from one position to 
another by means of a constant course angle, they are still primarily used 
in navigation. For details, we refer to \cite{WP, Alex, AP, TMJ}.  
Since the Earth's surface can be thought as a Riemannian sphere,  
the notion of loxodromes can be broaden to an arbitrary surface of revolution,
where meridians are copies of the profile curve.

In early of 20th century, C. A. Noble \cite{Noble} studied the loxodrome on the surface
of revolution in $\mathbb{E}^3$ and he also showed that the loxodrome
on sphereoid projects stereographically into the same spiral as the loxodrome on 
the sphere which is tangent to the sphereoid along equator. 
Then, S. Kos et al. \cite{KFH} and M. Petrovi\'{c} \cite{Petro} got the differential equations related to the loxodromes
on a sphere and a sphereoid and determined the length of such 
loxodromes, respectively.

Later, the topic of loxodromes have been studied on the rotational surfaces 
in Minkowski space which is important in general relativity. 
In 3--dimensional Minkowski space, there are three types of rotational surfaces 
with respect to the casual characters of rotation axes 
and the concept of angle to define loxodromes is not similar to Riemannian case. 
Therefore, the results in the Minkowski space are more richer than the Euclidean space. 
The authors determined the parametrizations of spacelike and timelike loxodromes 
on rotational surfaces in $\mathbb{E}^3_1$ which have either spacelike meridians or timelike meridians 
in \cite{BY1} and \cite{BM}, respectively.
For 4--dimensional Minkowski space, 
there are three types of rotation with 2--dimensional axes such as elliptic, hyperbolic and parabolic rotation
leaving a Riemannian plane, a Lorentzian plane or a degenerate plane pointwise fixed, respectively.  
Then, M. Babaarslan and M. G\"{u}m\"{u}\c{s} found the explicit parametrizations of loxodromes 
on such rotational surfaces of $\mathbb{E}^4_1$ in \cite{BG}.

Helicoidal surfaces are the natural generalizations of rotational surfaces 
and they play important roles in nature, science and engineering, see \cite{HR, J, PF}. 
Thus, this generalization leads the studies to the loxodromes on helicoidal surfaces in \cite{B1, BK2,BY2, BK1, B2}.
Recently, M. Babaarslan and N. S\"{o}nmez constructed the three kinds of 
helicoidal surfaces in $\mathbb{E}^4_1$ by using rotation with 2--dimensional axes and translation 
in $\mathbb{E}^4_1$ and they also obtained the general form of spacelike and timelike loxodromes 
on such helicoidal surfaces in \cite{BS}. 

With the motivation from geometry, M. Babaarslan, B. B. Demirci and R. Gen\c{c}
extended the notion of the helicoidal surfaces in $\mathbb{E}^4_1$ to 
higher dimensional Minkowski space and they made characterization of spacelike loxodromes 
on these helicoidal surfaces of $\mathbb{E}^n_1$ in \cite{BDG}.
In this context, this paper is a sequel of the article given by \cite{BDG}.   

In this paper, we study timelike loxodromes on three types of Lorentzian helicoidal surfaces 
in Minkowski $n$--space $\mathbb{E}^n_1$. 
We find the equations of timelike loxodromes on such helicodial surfaces which have either spacelike meridians or 
timelike meridians and then we get the explicit parametrizations of these loxodromes
by finding the general solution of the equations. 
As particular cases, we consider timelike loxodromes on each Lorentzian right
helicoidal surfaces in $\mathbb{E}^n_1$. Finally, we give an illustrative example.

\section{Preliminaries}


Let $\mathbb{E}^n_s$ denote the pseudo--Euclidean space of dimension $n$ and index $s$,
i.e.,\\
$\mathbb{R}^n=\{(x_1, x_2, \dots, x_n)\;|\; x_1, x_2, \dots, x_n \in\mathbb{R}\}$ equipped with the metric
\begin{equation}
ds^2=\sum_{i=1}^{n-s}dx_i^2-\sum_{j=n-s+1}^n dx_j^2.
\end{equation}
For $s=1$, $\mathbb{E}^n_1$ is known as the Minkowski space which is inspired by general relativity. 

A vector $v$ in $\mathbb{E}^n_1$ is called spacelike if $\langle v, v\rangle >0$ or $v=0$, 
timelike if $\langle v, v\rangle <0$, and lightlike (or null) if $\langle v, v\rangle=0$
and $v\neq 0$. The length of a vector $v$ in $\mathbb{E}^n_1$ 
is given by $||v||=\sqrt{|\langle v, v\rangle|}$ 
and $v$ is said to be an unit vector if $||v||=1$. 

Let $\alpha:I\subset\mathbb{R}\longrightarrow\mathbb{E}^n_1$ be a smooth regular curve
in $\mathbb{E}^n_1$, where $I$ is an open interval. 
Then, the causal character of $\alpha$ is spacelike, timelike or lightlike 
if $\alpha'$ is spacelike, timelike or lightlike, respectively, where $\alpha'=d\alpha/dt$.

Let $M$ be a pseudo--Riemannian surface in $\mathbb{E}^n_1$ given by a local parametrization ${\bf x}(u,v)$.
Then, the coefficients of the first fundamental form of $M$ are 
\begin{equation}
    E=\langle {\bf x}_u, {\bf x}_u \rangle,\;\;
    F=\langle {\bf x}_u, {\bf x}_v \rangle,\;\;
    G=\langle {\bf x}_v, {\bf x}_v \rangle
\end{equation}
where ${\bf x}_u$ and ${\bf x}_v$ denote the partial derivatives of ${\bf x}$ with respect $u$ and $v$, respectively.
Thus, the induced metric $g$ of $M$ in $\mathbb{E}^n_1$ is given by
\begin{equation}
    g=Edu^2+2Fdudv+Gdv^2.
\end{equation}
Also, a pseudo--Riemannian surface $M$ in $\mathbb{E}^n_1$ is called a spacelike surface or a timelike surface if
and only if $EG-F^2>0$ or $EG-F^2<0$, respectively. For the case $EG-F^2=0$, a pseudo--Riemannian surface $M$ is 
called a lightlike surface. Throughout this work, we will assume that the surface is nondegenerate.  
 
The length of the curve $\alpha$ on the pseudo--Riemannian surface $M$ between two points $u_0$ and $u_1$ in $\mathbb{E}^n_1$ is given by 
\begin{equation}
\label{length}
    L=\int_{u_0}^{u_1}\sqrt{\left |E+2F\frac{dv}{du}
    +G\left(\frac{dv}{du}\right)^2\right |}du.
\end{equation}
For later use, we give the following definition of Lorentzian angle in $\mathbb{E}^n_1$ by using \cite{Rat}. 
\begin{Definition}
Let $x$ and $y$ be vectors in $\mathbb{E}^n_1$. Then, we have the following statements:
\begin{itemize}
    \item [i.] for a spacelike vector $x$ and a timelike vector $y$, 
    there is a unique nonnnegative real number $\theta$ such that 
    \begin{equation}
    \label{ang3}
    \langle x,y\rangle=\pm ||x||||y||\sinh{\theta}.
    \end{equation}
    The number $\theta$ is called Lorentzian timelike angle 
    between $x$ and $y$. 
    
    \item [ii.] for timelike vectors $x$ and $y$,
    there is a unique nonnnegative real number $\theta$ such that 
    \begin{equation}
    \label{ang2}
    \langle x,y\rangle=||x||||y||\cosh{\theta}.
    \end{equation}
    The number $\theta$ is called Lorentzian timelike angle 
    between $x$ and $y$. 
    Note that $\theta=0$ if and only if $x$ and $y$ are positive scalar multiples of each other.
    \end{itemize}
\end{Definition}

By using \cite{BDG}, the definition of the helicoidal surfaces in $\mathbb{E}^n_1$ can be given as follows.

Let $\beta:I\subset\mathbb{R}\longrightarrow H\subset\mathbb{E}^n_1$ be a smooth curve in a hyperplane 
$\Pi\subset\mathbb{E}^n_1$, ${P}$ be a $(n-2)$-plane in the hyperplane $\Pi\subset\mathbb{E}^n_1$ and $\ell$ be a line parallel to ${P}$. A helicoidal surface in $\mathbb{E}^n_1$ is defined as a rotation of the curve $\beta$ around ${P}$ with a translation along the line $\ell$. Here, the speed of translation is proportional to the speed of this rotation. 
Thus, there are three types of helicoidal surfaces in $\mathbb{E}^n_1$ as follows:
\subsection{Helicoidal surface of type I}
Let $\{e_1,e_2,\dots, e_n\}$ be a standard orthonormal basis for $\mathbb{E}^n_1$.
Then, we choose a Lorentzian $(n-2)$--subspace ${\bf P}_I$ generated by 
$\{e_3, e_4,\dots, e_n\}$, ${\Pi}_I$ a hyperplane generated by 
$\{e_1, e_3,\dots,e_n\}$ and a line $\ell_I$ generated by $e_n$.
Assume that 
$\beta_I:I\longrightarrow\Pi_I\subset\mathbb{E}^n_1,
\;\beta_I(u)=(x_1(u), 0, x_3(u),\dots, x_n(u)),$
is a smooth regular curve lying in $\Pi_I$ defined on an open interval $I\subset\mathbb{R}$ and 
$u$ is arc length parameter, that is, $x_1'^2(u)+x_3'^2(u)+\cdots-x_n'^2(u)=\varepsilon$ with $\varepsilon=\pm 1$.
For $0\leq v<2\pi$ and a positive constant $c$, we consider the surface $M_I$
\begin{align}
\label{typeI}
H_1(u,v)=(x_1(u)\cos{v}, x_1(u)\sin{v}, x_3(u), \dots, x_{n-1}(u),  x_n(u)+cv)
\end{align}
which is the parametrization of the helicoidal surface obtained the rotation of the curve $\beta_1$ 
that leaves the Lorentzian subspace ${\bf P}_I$ pointwise fixed  
followed by the translation along $\ell_I$. The surface $M_I$ in $\mathbb{E}^n_1$
is called a helicoidal surface of type I.
Also, the surface $M_{I}$ is called a 
right helicoidal surface of type I in $\mathbb{E}^n_1$ if $x_n$ is a constant function.

\subsection{Helicoidal surface of type II}
Let $\{e_1,e_2,\dots, e_n\}$ be a standard orthonormal basis for $\mathbb{E}^n_1$.
Then, we choose a Riemannian $(n-2)$--subspace ${\bf P}_{II}$ generated by 
$\{e_1, e_2,\dots, e_{n-2}\}$,  
${\Pi}_{II}$ a hyperplane generated by 
$\{e_1, e_2,\dots,e_{n-2}, e_n\}$ and a line $\ell_{II}$ generated by $e_1$.
Assume that 
$\beta_{II}:I\longrightarrow\Pi_{II}\subset\mathbb{E}^n_1,
\;\beta_2(u)=(x_1(u), x_2(u),\dots,x_{n-2}(u),0, x_n(u)),$
is a smooth regular curve lying in $\Pi_{II}$ 
defined on an open interval $I\subset\mathbb{R}$ and $u$ is an arc length parameter, that is, 
$x_1'^2(u)+x_2'^2(u)+\cdots-x_n'^2(u)=\varepsilon$ for $\varepsilon=\pm 1$.
For $v\in\mathbb{R}$ and a positive constant $c$, we consider the surface $M_{II}$ 
\begin{align}
\label{typeII}
H_2(u,v)= (x_1(u)+cv, x_2(u), \dots, x_{n-2}(u), x_n(u)\sinh{v}, x_n(u)\cosh{v})
\end{align}
which is the parametrization of the helicoidal surface obtained the rotation of the curve $\beta_{II}$ which leaves 
Riemannian subspace ${\bf P}_{II}$ pointwise fixed followed by the translation along $\ell_{II}$. 
The surface $M_{II}$ in $\mathbb{E}^n_1$
is called a helicoidal surface of type II.
Also, the surface $M_{II}$ is called a 
right helicoidal surface of type II in $\mathbb{E}^n_1$ if $x_1$ is a constant function, .

\subsection{Helicoidal surface of type III}
Let define a pseudo--orthonormal basis 
$\{e_1, e_2, \cdots, \xi_{n-1}, \xi_n\}$ for $\mathbb{E}^n_1$
using a standard orthonormal basis $\{e_1,e_2,\cdots,e_{n-1},e_n\}$ for $\mathbb{E}^n_1$
such that 
\begin{equation}
    \xi_{n-1}=\frac{1}{\sqrt{2}}(e_n-e_{n-1})\;\;\mbox{and}\;\;
    \xi_{n}=\frac{1}{\sqrt{2}}(e_{n}+e_{n-1})
\end{equation}
where $\langle\xi_{n-1},\xi_{n-1}\rangle=\langle\xi_n,\xi_n\rangle=0$ and 
$\langle\xi_{n-1},\xi_n\rangle=-1$. 
Then, we choose a degenerate $(n-2)$--subspace ${\bf P}_{III}$ generated by $\{e_1, e_3, \cdots, \xi_{n-1}\}$, 
$\Pi_{III}$ a hyperplane generated by $\{e_1, e_3,\cdots,e_{n-2}, \xi_{n-1}, \xi_n \}$ and
a line $\ell_{III}$ generated by $\xi_{n-1}$. 
Assume that
$\beta_{III}:I\longrightarrow\Pi_{III}\subset\mathbb{E}^n_1,\;\beta_3(u)=x_1(u) e_1+ x_3(u)e_3+\cdots+x_{n-1}(u)\xi_{n-1}+x_n(u)\xi_n$ 
is a smooth curve lying in $\Pi_{III}$ defined on an open interval $I\subset\mathbb{R}$ 
and $u$ is an arc length parameter, that is, 
$x_1'^2(u)+x_3'^2(u)+\cdots-2x_{n-1}'(u)x_n'(u)=\varepsilon$ 
for $\varepsilon=\pm 1$.
Then, we consider the surface $M_{III}$ 
\begin{align}
\label{typeIII}
\begin{split}
H_3(u,v)=& x_1(u)e_1+\sqrt{2}vx_n(u)e_2+x_3(u)e_3+\cdots\\
 &+x_{n-2}(u)e_{n-2}
 +(x_{n-1}(u)+v^2x_n(u)+cv)\xi_{n-1}+x_n(u)\xi_n
 \end{split}
\end{align}
which is the parametrization of 
the helicoidal surface obtained a rotation 
of the curve $\beta_{III}$ which leaves the degenerate subspace ${\bf P}_{III}$ pointwise fixed
followed by the translation along $\ell_{III}$. The surface $M_{III}$ in $\mathbb{E}^n_1$
is called the helicoidal surface of type III.
If $x_n$ is a constant function, then the helicoidal surface $M_{III}$ is called a right helicoidal surface of type III 
in $\mathbb{E}^n_1$.
\begin{Remark}
It can be easily seen that the helicoidal surfaces $M_1$--$M_3$
in $\mathbb{E}^n_1$
defined by \eqref{typeI}, \eqref{typeII} and \eqref{typeIII} reduce to
the rotational surfaces in $\mathbb{E}^n_1$ for $c=0$. 
\end{Remark} 

\section{Timelike Loxodrome on Timelike Helicoidal Surface of Type I in $\mathbb{E}^n_1$}
In this section, we determine the parametrization of timelike loxodrome on the timelike helicoidal surface of type I in $\mathbb{E}^n_1$ defined by \eqref{typeI}. 

Consider the timelike helicoidal surface of type I, $M_1$, in $\mathbb{E}^n_1$ given by \eqref{typeI}. 
From a simple calculation, the induced metric $g_1$ on $M_1$ is defined by 
\begin{equation}
 g_1=\varepsilon du^2-2cx_n'(u)dudv+(x_1^2(u)-c^2)dv^2.  
\end{equation}
Since $M_1$ is a timelike surface in $\mathbb{E}^n_1$, 
we have $\varepsilon x_1^2(u)-c^2(\varepsilon+x_n'^2(u))<0$. 
Assume that $\alpha_1(t)=H_1(u(t),v(t))$ is a timelike loxodrome on $M_1$ in $\mathbb{E}^n_1$, that is,
$\alpha_1(t)$ intersects the meridian $m_1(u)=H_1(u,v_0)$ for a constant $v_0$ with a constant angle $\phi_0$ at the point $p\in M_1$. Then, we have 
\begin{align}
\label{type1eq1}
    &\tilde{g}(\dot{\alpha}(t),(m_I)_u)=
    \varepsilon\frac{du}{dt}-cx_n'(u)\frac{dv}{dt},\\
\label{type1eq2}    
    &\varepsilon\left(\frac{du}{dt}\right)^2-2cx_n'(u)\frac{du}{dt}\frac{dv}{dt}+(x_1^2(u)-c^2)\left(\frac{dv}{dt}\right)^2<0.
\end{align}
In this context, there are two following cases occur with respect to the causal character of the meridian curve $m_1(u)$.\\
\textit{Case i.} $M_1$ has a spacelike meridian curve $m_1(u)$,
that is, $\varepsilon=1$. Using the equations \eqref{type1eq1} and \eqref{type1eq2} in \eqref{ang3}, we get
\begin{equation}
\label{eq1}
    \sinh{\phi_0}=\pm\frac{\frac{du}{dt}-cx_n'(u)\frac{dv}{dt}}
    {\sqrt{-\left(\frac{du}{dt}\right)^2+2cx_n'(u)\frac{du}{dt}\frac{dv}{dt}-(x_1^2(u)-c^2)\left(\frac{dv}{dt}\right)^2}}.
\end{equation}
\\
\textit{Case ii.} $M_1$ has a timelike meridian curve $m_1(u)$, that is, $\varepsilon=-1$.  
Using the equations \eqref{type1eq1} and \eqref{type1eq2} in \eqref{ang2}, we obtain 
\begin{equation}
\label{eq2}
    \cosh{\phi_0}=-\frac{\frac{du}{dt}+cx_n'(u)\frac{dv}{dt}}
    {\sqrt{\left(\frac{du}{dt}\right)^2+2cx_n'(u)\frac{du}{dt}
    \frac{dv}{dt}-(x_1^2(u)-c^2)\left(\frac{dv}{dt}\right)^2}}.
\end{equation}
After a simple calculation in equations \eqref{eq1} and \eqref{eq2}, we get the following Lemma. 
\begin{Lemma}
\label{lemI}
Let $M_1$ be a timelike helicoidal surface of type I in $\mathbb{E}^n_1$ defined by \eqref{typeI}. Then,
$\alpha_1(t)=H_1(u(t),v(t))$ is a timelike loxodrome 
with $\dot{u}\neq 0$ if and only if one of the following differential equations is satisfied: 
\begin{itemize}
    \item [(i.)] for having a spacelike meridian,
    \begin{equation}
    \label{lem1eq1}
    (\sinh^2{\phi_0}(x_1^2(u)-c^2)+c^2x_n'^2(u))\dot{v}^2
    -2c\cosh^2\phi_0x_n'(u)\dot{u}\dot{v}+\cosh^2{\phi_0}\dot{u}^2=0
    \end{equation}
    
    \item [(ii.)] for having a timelike meridian,
    \begin{equation}
    \label{lem1eq2}
    (\cosh^2{\phi_0}(x_1^2(u)-c^2)+c^2x_n'^2(u))\dot{v}^2
    -2c\sinh^2\phi_0x_n'(u)\dot{u}\dot{v}-\sinh^2{\phi_0}\dot{u}^2=0
    \end{equation}
\end{itemize}
where $\phi_0$ is a nonnegative constant and $.$ denotes the derivative of any function respect to $t$.
\end{Lemma}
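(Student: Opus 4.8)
The plan is to obtain both equations by a direct algebraic reduction of the angle relations \eqref{eq1} and \eqref{eq2}, which already package the loxodrome condition together with the timelike requirement \eqref{type1eq2}. In each case the mechanism is the same: square the relation, clear the square root in the denominator, and read off the quadratic form in $\dot u$ and $\dot v$. The forward implication is then routine, and the converse comes from running the steps backwards once the sign data are reinstated.

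For (i), with a spacelike meridian ($\varepsilon=1$), the first step is to square \eqref{eq1}; the ambiguity $\pm$ disappears, and multiplying through by the radicand gives
\begin{equation}
\sinh^2{\phi_0}\bigl(-\dot u^2+2cx_n'(u)\dot u\dot v-(x_1^2(u)-c^2)\dot v^2\bigr)=\bigl(\dot u-cx_n'(u)\dot v\bigr)^2.
\end{equation}
Expanding the right-hand side, transposing everything to one side, and collecting the coefficients of $\dot u^2$, $\dot u\dot v$ and $\dot v^2$, the decisive step is the identity $1+\sinh^2{\phi_0}=\cosh^2{\phi_0}$: it turns the $\dot u^2$ coefficient into $\cosh^2{\phi_0}$ and the mixed coefficient into $-2c\cosh^2{\phi_0}\,x_n'(u)$, while the $\dot v^2$ coefficient becomes $\sinh^2{\phi_0}(x_1^2(u)-c^2)+c^2x_n'^2(u)$. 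This is exactly \eqref{lem1eq1}.

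Case (ii), with a timelike meridian ($\varepsilon=-1$), proceeds identically from \eqref{eq2}: squaring removes the leading minus sign, and after expanding $\bigl(\dot u+cx_n'(u)\dot v\bigr)^2$ and collecting terms I would use $\cosh^2{\phi_0}-1=\sinh^2{\phi_0}$ to reach the coefficients of \eqref{lem1eq2}, up to an overall factor $-1$ that I would clear by multiplying the whole equation through. The sign changes relative to (i) are traceable to the different numerator and radicand of \eqref{eq2}, i.e.\ to $\cosh$ replacing $\sinh$.

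The step that genuinely needs attention is not the computation but the \emph{if and only if}, because squaring is not reversible on its own. For the converse I would reinstate the timelike condition \eqref{type1eq2}, which forces the radicand in \eqref{eq1}, respectively \eqref{eq2}, to be positive, and then fix the sign of the numerator so that $\phi_0\geq 0$ selects the correct branch ($\pm$ in the spacelike case, and the unique sign compelled by $\cosh{\phi_0}\geq 1$ in the timelike case); this recovers the original angle relation rather than a spurious one. The hypothesis $\dot u\neq 0$ guarantees transversality to the parallels and keeps the denominators nondegenerate, so the squaring can be undone and the equivalence holds in both directions.
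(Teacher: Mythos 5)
Your proposal is correct and follows exactly the route the paper takes: the paper's entire justification for Lemma \ref{lemI} is the remark that it follows ``after a simple calculation'' from \eqref{eq1} and \eqref{eq2}, and your squaring, clearing of the radicand, and use of $1+\sinh^2\phi_0=\cosh^2\phi_0$ (respectively $\cosh^2\phi_0-1=\sinh^2\phi_0$, with the overall sign flip) is precisely that calculation, reproducing \eqref{lem1eq1} and \eqref{lem1eq2}. Your additional care about reversibility of the squaring step, using the timelike condition \eqref{type1eq2} and the sign of the numerator to recover the angle relations, is a point the paper passes over silently, so your write-up is if anything more complete than the original.
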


\begin{Theorem}
\label{thmI}
A timelike loxodrome on a timelike helicoidal surface of type I in $\mathbb{E}^n_1$ defined by \eqref{typeI} is parametrized by $\alpha_1(u)=H_1(u,v(u))$
where $v(u)$ is given by one of the following functions:
\begin{itemize}
    \item [(i.)] 
    $\displaystyle{v(u)=\pm\frac{1}{2\sinh{\phi_0}}\int_{u_0}^{u}
    \frac{d\xi}{\sqrt{c^2-x_1^2(\xi)}}}$, 
    
    \item [(ii.)] 
    $\displaystyle{v(u)=\pm\frac{1}{2\cosh{\phi_0}}\int_{u_0}^{u}
    \frac{d\xi}{\sqrt{c^2-x_1^2(\xi)}}}$,
    
    \item [(iii.)] for $\sinh^2\phi_0(x_1^2(\xi)-c^2)+c^2x_n'^2(\xi)\neq 0$,\\ 
    $\displaystyle{v(u)=\int_{u_0}^{u}
    \frac{2c\cosh^2{\phi_0}x_n'(\xi)
    \pm\sqrt{\sinh^2{(2\phi_0)}(c^2(x_n'^2(\xi)+1)-x_1^2(\xi))}}
    {2\sinh^2\phi_0(x_1^2(\xi)-c^2)+2c^2x_n'^2(\xi)}d\xi}$,
    
    \item [(iv.)] for  $\cosh^2\phi_0(x_1^2(\xi)-c^2)+c^2x_n'^2(\xi)\neq 0$,\\
    $\displaystyle{v(u)=\int_{u_0}^{u}
    \frac{2c\sinh^2{\phi_0}x_n'(\xi)
    \pm\sqrt{\sinh^2{(2\phi_0)}(c^2(x_n'^2(\xi)-1)+x_1^2(\xi))}}
    {2\cosh^2\phi_0(x_1^2(\xi)-c^2)+2c^2x_n'^2(\xi)}d\xi}$
\end{itemize}
where $\phi_0$ is a nonnegative constant and $c>0$ is a constant. 
\end{Theorem}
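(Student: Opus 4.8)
The plan is to reduce the two quadratic ordinary differential equations of Lemma \ref{lemI} to explicit formulas for $dv/du$ and then integrate. Since $\dot u\neq 0$, I would reparametrize the loxodrome by $u$ itself, writing $\alpha_1(u)=H_1(u,v(u))$, and introduce $w:=dv/du=\dot v/\dot u$. Dividing \eqref{lem1eq1} and \eqref{lem1eq2} through by $\dot u^2$ turns each into a quadratic equation in the single unknown $w$ with $u$-dependent coefficients, after which the whole problem becomes the algebra of solving a quadratic.

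For each equation I would distinguish the generic case, in which the coefficient of $w^2$ is nonzero, from the degenerate case, in which it vanishes. In the generic case of \eqref{lem1eq1} I apply the quadratic formula to
\[
\bigl(\sinh^2\phi_0(x_1^2(u)-c^2)+c^2x_n'^2(u)\bigr)w^2-2c\cosh^2\phi_0\,x_n'(u)\,w+\cosh^2\phi_0=0,
\]
which yields the two roots displayed in part (iii). The crucial step is simplifying the discriminant: factoring out $4\cosh^2\phi_0$ and using $\cosh^2\phi_0-1=\sinh^2\phi_0$ collapses it to $4\cosh^2\phi_0\sinh^2\phi_0\bigl(c^2(x_n'^2+1)-x_1^2\bigr)$, and then $\sinh^2(2\phi_0)=4\sinh^2\phi_0\cosh^2\phi_0$ reproduces exactly the radicand in (iii). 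The identical procedure applied to \eqref{lem1eq2}, using instead $\sinh^2\phi_0+1=\cosh^2\phi_0$, gives the radicand $\sinh^2(2\phi_0)\bigl(c^2(x_n'^2-1)+x_1^2\bigr)$ and hence the roots of part (iv).

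In the degenerate case of \eqref{lem1eq1} the vanishing of the leading coefficient forces $c^2x_n'^2=\sinh^2\phi_0(c^2-x_1^2)$, so $x_n'=\pm\sinh\phi_0\sqrt{c^2-x_1^2}/c$, while the equation itself becomes linear with solution $w=1/(2c\,x_n')$; substituting the former into the latter gives $w=\pm 1/\bigl(2\sinh\phi_0\sqrt{c^2-x_1^2}\bigr)$, which is part (i). The degenerate case of \eqref{lem1eq2} is treated identically, yielding $x_n'=\pm\cosh\phi_0\sqrt{c^2-x_1^2}/c$ and $w=-1/(2c\,x_n')$, hence part (ii). Finally, integrating $w=dv/du$ from $u_0$ to $u$ in each of the four cases produces the stated integral expressions for $v(u)$.

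The main obstacle I anticipate is the algebraic reduction of the two discriminants to the clean hyperbolic form appearing in (iii) and (iv); the sign bookkeeping in the $\pm$ roots and the correct matching of the two degenerate branches with (i) and (ii) also demand care, but these become routine once the discriminant identities are established.
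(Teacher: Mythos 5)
Your proposal is correct and follows essentially the same route as the paper's proof: dividing the equations of Lemma \ref{lemI} by $\dot u^2$ to get quadratics in $dv/du$, splitting into the degenerate case (vanishing leading coefficient, which forces $x_n'=\pm\sinh\phi_0\sqrt{c^2-x_1^2}/c$ or $x_n'=\pm\cosh\phi_0\sqrt{c^2-x_1^2}/c$ and yields parts (i) and (ii)) and the generic case (quadratic formula with the discriminant collapsing via $\sinh^2(2\phi_0)=4\sinh^2\phi_0\cosh^2\phi_0$, yielding parts (iii) and (iv)), then integrating. Your write-up is in fact more explicit than the paper's about the discriminant simplification, which the paper dismisses as "easily obtained."
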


\begin{proof}
Assume that $M_1$ is a timelike helicoidal surface in $\mathbb{E}^n_1$ defined by \eqref{typeI} and $\alpha_1(t)=H_1(u(t),v(t))$ is a timelike 
loxodrome on $M_1$ in $\mathbb{E}^n_1$. From Lemma \ref{lemI},
we have the equations \eqref{lem1eq1} and \eqref{lem1eq2}. 

For a spacelike meridian, the equation \eqref{lem1eq1} implies  
\begin{equation}
\label{peq1}
     (\sinh^2{\phi_0}(x_1^2(u)-c^2)+c^2x_n'^2(u))
     \left(\frac{dv}{du}\right)^2
    -2c\cosh^2\phi_0x_n'(u)\frac{du}{dv}+\cosh^2{\phi_0}=0.
\end{equation}
If $\sinh^2{\phi_0}(x_1^2(u)-c^2)+c^2x_n'^2(u)=0$, 
then the equation \eqref{peq1} becomes
\begin{equation}
2c\cosh^2\phi_0x_n'(u)\frac{dv}{du}-\cosh^2{\phi_0}=0
\end{equation}
whose the solution is $v(u)=\frac{1}{2c}\int_{u_0}^u\frac{d\xi}{x_n'(\xi)}$. 
On the other side, $\sinh^2{\phi_0}(x_1^2(u)-c^2)+c^2x_n'^2(u)=0$ implies $x_n'(u)=\pm\frac{\sinh{\phi_0}}{c}\sqrt{c^2-x_1^2(u)}$ 
for $\phi_0\neq 0$, 
Thus, we get the desired equation in (i). Also, we note that 
$c^2-x_1^2(u)>0$ due the fact that $M_I$ is a timelike surface in $\mathbb{E}^n_1$. 

If $\sinh^2{\phi_0}(x_1^2(u)-c^2)+c^2x_n'^2(u)\neq 0$, 
it can be easily obtained that the solution $v(u)$ of the differential equation \eqref{peq1} is given by the integral in (iii).

Similarly, for a timelike meridian, the equation \eqref{lem1eq2} implies  
\begin{equation}
\label{peq2}
   (\cosh^2{\phi_0}(x_1^2(u)-c^2)+c^2x_n'^2(u))
   \left(\frac{dv}{du}\right)^2
    -2c\sinh^2\phi_0x_n'(u)\frac{dv}{du}-\sinh^2{\phi_0}=0
\end{equation}
If $\cosh^2{\phi_0}(x_1^2(u)-c^2)+c^2x_n'^2(u)=0$, 
the equation \eqref{peq2} reduces to the following equation
\begin{equation}
2c\sinh^2\phi_0x_n'(u)\frac{dv}{du}+\sinh^2{\phi_0}=0
\end{equation}
whose the solution is $v(u)=-\frac{1}{2c}\int_{u_0}^u\frac{d\xi}{x_n'(\xi)}$ for 
a nonzero constant $\phi_0$. 
Since $x_n'(u)=\pm\frac{\cosh{\phi_0}}{c}\sqrt{c^2-x_1^2(u)}$, 
we get the desired equation in (ii). Also, we note that 
$c^2-x_1^2(u)>0$ due the fact that $M_1$ is a timelike surface in $\mathbb{E}^n_1$. 
If  $\cosh^2{\phi_0}(x_1^2(u)-c^2)+c^2x_n'^2(u)\neq 0$, 
the solution $v(u)$ of the differential equation \eqref{peq2} is given by the integral in (iv). Thus, we get
the parametrization of the loxodrome with respect to $u$ parameter such that $\alpha_1(u)=H_1(u, v(u))$ where $v(u)$ is defined by one of the integrals in (i)-(iv). \qed
\end{proof}
Now, we consider a timelike right helicoidal surface of type I in $\mathbb{E}^n_1$, denoted by $M_{1}^{R}$, that is, 
\begin{equation}
\label{right1}
    H_{1}^{R}(u,v)=(x_1(u)\cos{v}, x_1(u)\sin{v}, x_3(u),\dots, x_{n-1}(u), x_{n_0}+cv)
\end{equation}
where $c\neq 0$ and $x_{n_0}$ are constants. 
Then, from the equations in (iii) and (iv) of Theorem \ref{thmI}, we give the following corollary. 
\begin{Corollary}
\label{CorI}
A timelike loxodrome on a timelike right helicoidal surface of type I in $\mathbb{E}^n_1$ defined by \eqref{right1} is parametrized by $\alpha_1^R(u)=H_{1}^{R}(u,v(u))$
where $v(u)$ is given by 
\begin{equation}
v(u)=\pm\coth{\phi_0}\int_{u_0}^u\frac{d\xi}
    {\sqrt{c^2-x_1^2(\xi)}}
\end{equation}   
for constant $\phi_0>0$. 
\end{Corollary}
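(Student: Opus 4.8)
The plan is to specialize Theorem~\ref{thmI} to the right helicoidal surface, where $x_n$ is the constant function $x_{n_0}$, and hence $x_n'(\xi)\equiv 0$. I would start from the two equations in parts (iii) and (iv) of Theorem~\ref{thmI}, since those are the formulas that govern the generic (nonvanishing) case and remain valid here; the terms involving $x_n'(\xi)$ simply drop out. Concretely, setting $x_n'=0$ in part (iii) collapses the denominator $2\sinh^2\phi_0(x_1^2(\xi)-c^2)+2c^2x_n'^2(\xi)$ to $2\sinh^2\phi_0(x_1^2(\xi)-c^2)$ and the numerator $2c\cosh^2\phi_0 x_n'(\xi)\pm\sqrt{\sinh^2(2\phi_0)(c^2(x_n'^2(\xi)+1)-x_1^2(\xi))}$ to $\pm\sqrt{\sinh^2(2\phi_0)(c^2-x_1^2(\xi))}$.

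The main work is then an algebraic simplification of this reduced integrand. Using the identity $\sinh(2\phi_0)=2\sinh\phi_0\cosh\phi_0$, the square root becomes $2\sinh\phi_0\cosh\phi_0\sqrt{c^2-x_1^2(\xi)}$ (recall $c^2-x_1^2(\xi)>0$ since $M_1$ is timelike, so the radicand is positive). Dividing by the denominator $2\sinh^2\phi_0(x_1^2(\xi)-c^2)=-2\sinh^2\phi_0(c^2-x_1^2(\xi))$, the factor $\sqrt{c^2-x_1^2(\xi)}$ over $(c^2-x_1^2(\xi))$ yields $1/\sqrt{c^2-x_1^2(\xi)}$, and the constants combine as $\dfrac{2\sinh\phi_0\cosh\phi_0}{2\sinh^2\phi_0}=\dfrac{\cosh\phi_0}{\sinh\phi_0}=\coth\phi_0$. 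Carrying the signs carefully, the integrand reduces to $\pm\coth\phi_0\,\dfrac{1}{\sqrt{c^2-x_1^2(\xi)}}$, which gives exactly the stated $v(u)$. I would then check that the timelike-meridian formula in part (iv), after the same substitution $x_n'=0$, produces the identical expression: there the radicand becomes $\sinh^2(2\phi_0)(x_1^2(\xi)-c^2+c^2\cdot 0)$, and this is where one must be attentive to the sign of the radicand.

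I expect the sign inside the radical of part (iv) to be the main obstacle. With $x_n'=0$ the timelike-meridian radicand reads $\sinh^2(2\phi_0)(x_1^2(\xi)-c^2)$, which is \emph{negative} on a timelike surface, so a literal substitution appears to give an imaginary integrand. The resolution is that the two cases (spacelike versus timelike meridian) are not independent for a right helicoidal surface: when $x_n'=0$, the surface has a fixed causal type for its meridian determined by $\varepsilon$, and only the compatible one of (iii)/(iv) actually applies. Thus I would argue that the single formula with $\coth\phi_0$ captures both admissible reductions, the distinction between $\sinh$ and $\cosh$ collapsing once $x_n'=0$ removes the off-diagonal coupling in the metric. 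Finally I would note that $c\neq 0$ and $\phi_0>0$ guarantee the coefficient $\coth\phi_0$ is well defined, completing the derivation of the parametrization $\alpha_1^R(u)=H_1^R(u,v(u))$.
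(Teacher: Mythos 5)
Your reduction of part (iii) of Theorem \ref{thmI} is correct and is exactly the paper's route: setting $x_n'\equiv 0$, using $\sinh(2\phi_0)=2\sinh\phi_0\cosh\phi_0$ and $c^2-x_1^2(\xi)>0$, the integrand collapses to $\pm\coth\phi_0/\sqrt{c^2-x_1^2(\xi)}$, which is the stated formula. The genuine gap is in your handling of part (iv). First, your sign analysis is backwards: the inequality $c^2-x_1^2>0$ is not a property of ``a timelike surface'' as such, but of a timelike surface \emph{with spacelike meridian}; under the hypothesis of part (iv) (timelike meridian, $\varepsilon=-1$), timelikeness of $M_1$ reads $\varepsilon(x_1^2-c^2)<0$, i.e.\ $x_1^2-c^2>0$, so the radicand $\sinh^2(2\phi_0)\,(x_1^2(\xi)-c^2)$ would be \emph{positive}, not negative --- no imaginary integrand arises, and this cannot be the reason to discard the case. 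Second, your claim that the single $\coth\phi_0$ formula ``captures both admissible reductions'' is false: if (iv) applied, the same computation would give $v(u)=\pm\tanh\phi_0\int_{u_0}^u d\xi/\sqrt{x_1^2(\xi)-c^2}$, with $\tanh$ in place of $\coth$ and the opposite radicand; compare Corollary \ref{CorII}, where exactly this $\tanh\phi_0$ expression appears for type II surfaces.

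The correct way to dispose of (iv) --- which you gesture at (``the surface has a fixed causal type for its meridian'') but never pin down --- is that a right helicoidal surface of type I cannot have a timelike meridian at all: when $x_n\equiv x_{n_0}$, the meridian velocity is $(x_1',0,x_3',\dots,x_{n-1}',0)$, whose squared norm is $x_1'^2+x_3'^2+\cdots+x_{n-1}'^2\geq 0$, so the arc-length normalization $x_1'^2+x_3'^2+\cdots-x_n'^2=\varepsilon$ forces $\varepsilon=+1$. Hence only case (iii) of Theorem \ref{thmI} is ever realized (cases (i)--(ii) are also vacuous here, since with $x_n'\equiv 0$ and $\phi_0>0$ their defining conditions would force $x_1^2=c^2$, contradicting timelikeness of the surface), and the corollary follows from (iii) alone. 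This is precisely the observation the paper makes: explicitly for type II right helicoidal surfaces just before Corollary \ref{CorII}, and for type I in the Conclusion, where it is noted that Lorentzian right helicoidal surfaces of type I occur only with spacelike meridians.
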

Using the equation \eqref{length} and Corollary \ref{CorI}, we give the following statement:
\begin{Corollary}
The length of a timelike loxodrome on a timelike right helicoidal surface of type I in $\mathbb{E}^n_1$ defined by \eqref{right1} between two points $u_0$ and $u_1$ is given by 

$$L=\left|\frac{u_1-u_0}{\sinh{\phi_0}}\right|$$
    
where $\phi_0$ is a nonnegative constant. 
\end{Corollary}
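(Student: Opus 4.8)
The plan is to feed the explicit parametrization from Corollary \ref{CorI} into the length formula \eqref{length} and to exploit a cancellation that makes the arc-length density constant. First I would specialize the induced metric $g_1$ to the right helicoidal surface \eqref{right1}. Since $x_n=x_{n_0}$ is constant we have $x_n'(u)=0$, so the coefficients of the first fundamental form collapse to $E=\varepsilon$, $F=-cx_n'(u)=0$ and $G=x_1^2(u)-c^2$. Because the formula in Corollary \ref{CorI} is the spacelike--meridian case, I take $\varepsilon=1$, and I record that $c^2-x_1^2(u)>0$ throughout, since $M_1^R$ is a timelike surface.

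Next I would differentiate the parametrization $v(u)$ supplied by Corollary \ref{CorI} to obtain the slope of the loxodrome,
\[
\frac{dv}{du}=\pm\frac{\coth\phi_0}{\sqrt{c^2-x_1^2(u)}},
\]
and substitute this together with $E,F,G$ into the integrand of \eqref{length}. The heart of the argument is the ensuing algebraic collapse:
\[
E+2F\frac{dv}{du}+G\left(\frac{dv}{du}\right)^2
=1+(x_1^2(u)-c^2)\,\frac{\coth^2\phi_0}{c^2-x_1^2(u)}
=1-\coth^2\phi_0.
\]
The key observation is that the ratio $(x_1^2-c^2)/(c^2-x_1^2)=-1$ eliminates all dependence on $u$, and the hyperbolic identity $\cosh^2\phi_0-\sinh^2\phi_0=1$ rewrites $1-\coth^2\phi_0$ as $-1/\sinh^2\phi_0$. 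Taking absolute value and square root (with $\phi_0>0$), the integrand reduces to the constant $1/\sinh\phi_0$.

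Finally I would integrate this constant over $[u_0,u_1]$, giving $L=(u_1-u_0)/\sinh\phi_0$, and then record the absolute value—justified because \eqref{length} already carries $|\cdot|$ and because a length is nonnegative regardless of the ordering of $u_0$ and $u_1$—to reach $L=\left|\frac{u_1-u_0}{\sinh\phi_0}\right|$. There is no genuine obstacle here: the only delicate points are the sign bookkeeping inside the absolute value and the cancellation that renders the arc-length density independent of $u$. That cancellation is precisely what makes the result so clean, reflecting the fact that on a right helicoidal surface the loxodrome traverses the meridian parameter $u$ at uniform speed.
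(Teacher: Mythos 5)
Your proof is correct and is exactly the argument the paper intends: the paper offers no separate proof beyond the remark that the corollary follows from the length formula \eqref{length} and Corollary \ref{CorI}, which is precisely the substitution and cancellation you carry out (with $x_n'=0$ giving $E=1$, $F=0$, $G=x_1^2-c^2$, and $1-\coth^2\phi_0=-1/\sinh^2\phi_0$ making the integrand constant). The only point worth adding is that $\varepsilon=1$ is not merely a convention inherited from Corollary \ref{CorI} but is forced: the arc-length condition $x_1'^2+x_3'^2+\cdots+x_{n-1}'^2-x_n'^2=\varepsilon$ with $x_n'\equiv 0$ leaves a sum of squares on the left, so a right helicoidal surface of type I can only have a spacelike meridian.
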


\section{Timelike Loxodrome on Timelike Helicoidal Surface of Type II in $\mathbb{E}^n_1$}
In this section, we determine the parametrization of timelike loxodrome on the timelike helicoidal surface of type II in $\mathbb{E}^n_1$ defined by \eqref{typeII}. 

Consider the timelike helicoidal surface of type II, $M_2$, in $\mathbb{E}^n_1$ given by \eqref{typeII}. 
From a simple calculation, the induced metric $g_2$ on $M_2$ is defined by 
\begin{equation}
 g_2=\varepsilon du^2+2cx_1'(u)dudv+(c^2+x_n^2(u))dv^2.  
\end{equation}
Since $M_2$ is a timelike surface in $\mathbb{E}^n_1$, 
we have $c^2(\varepsilon-x_1'^2(u))+\varepsilon x_n^2(u)<0$. 
Assume that $\alpha_2(t)=H_2(u(t),v(t))$ is a timelike loxodrome on $M_2$ in $\mathbb{E}^n_1$, that is,
$\alpha_2(t)$ intersects the meridian $m_2(u)=H_2(u,v_0)$ for a constant $v_0$ with a constant angle $\phi_0$ at the point $p\in M_2$. 
Then, we have 
\begin{align}
\label{type2eq1}
    &\tilde{g}(\dot{\alpha}_2(t),(m_2)_u)=
    \varepsilon\frac{du}{dt}+cx_1'(u)\frac{dv}{dt},\\
\label{type2eq2}    
    &\varepsilon\left(\frac{du}{dt}\right)^2+2cx_1'(u)\frac{du}{dt}\frac{dv}{dt}+(c^2+x_n^2(u))\left(\frac{dv}{dt}\right)^2<0.
\end{align}
In this context, there are two following cases occur with respect to the causal character of the meridian curve $m_2(u)$.\\
\textit{Case i.} $M_2$ has a spacelike meridian curve $m_2(u)$,
that is, $\varepsilon=1$. Using the equations \eqref{type2eq1} and \eqref{type2eq2} in \eqref{ang3}, we get
\begin{equation}
\label{eq3}
    \sinh{\phi_0}=\pm\frac{\frac{du}{dt}+cx_1'(u)\frac{dv}{dt}}
    {\sqrt{-\left(\frac{du}{dt}\right)^2-2cx_1'(u)\frac{du}{dt}\frac{dv}{dt}-(c^2+x_n^2(u))\left(\frac{dv}{dt}\right)^2}}.
\end{equation}
\\
\textit{Case ii.} $M_2$ has a timelike meridian curve $m_2(u)$, that is, $\varepsilon=-1$. 
Using the equations \eqref{type2eq1} and \eqref{type2eq2} in \eqref{ang2}, we obtain 
\begin{equation}
\label{eq4}
    \cosh{\phi_0}=\frac{-\frac{du}{dt}+cx_1'(u)\frac{dv}{dt}}
    {\sqrt{\left(\frac{du}{dt}\right)^2-2cx_1'(u)\frac{du}{dt}
    \frac{dv}{dt}-(c^2+x_n^2(u))\left(\frac{dv}{dt}\right)^2}}.
\end{equation}
After a simple calculation in equations \eqref{eq3} and \eqref{eq4}, we get the following lemma. 
\begin{Lemma}
\label{lemII}
Let $M_2$ be a timelike helicoidal surface of type II in $\mathbb{E}^n_1$ defined by \eqref{typeII}. Then,
$\alpha_2(t)=H_2(u(t),v(t))$ is a timelike loxodrome 
with $\dot{u}\neq 0$ if and only if one of the following differential equations is satisfied: 
\begin{itemize}
    \item [(i.)] for having a spacelike meridian,
    \begin{equation}
    \label{lem2eq1}
    (\sinh^2{\phi_0}(x_n^2(u)+c^2)+c^2x_1'^2(u))\dot{v}^2
    +2c\cosh^2{\phi_0}x_1'(u)\dot{u}\dot{v}+\cosh^2{\phi_0}\dot{u}^2=0
    \end{equation}
    
    \item [(ii.)] for having a timelike meridian,
    \begin{equation}
    \label{lem2eq2}
    (\cosh^2{\phi_0}(x_n^2(u)+c^2)+c^2x_1'^2(u))\dot{v}^2
    +2c\sinh^2{\phi_0}x_1'(u)\dot{u}\dot{v}-\sinh^2{\phi_0}\dot{u}^2=0
    \end{equation}
\end{itemize}
 where $\phi_0$ is a nonzero constant and $.$ denotes the derivative of any function respect to $t$. 
\end{Lemma}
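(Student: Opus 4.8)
The plan is to obtain the two differential equations by clearing the radical in the Lorentzian--angle relations \eqref{eq3} and \eqref{eq4} through squaring, and then regrouping the resulting homogeneous quadratic in $\dot u, \dot v$ with the help of the identity $\cosh^2\phi_0 - \sinh^2\phi_0 = 1$. Since the two cases are distinguished only by the causal character of the meridian $m_2(u)$, I would treat them in parallel, mirroring the derivation already carried out for Lemma \ref{lemI}.

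In case (i) the meridian is spacelike ($\varepsilon = 1$) and \eqref{eq3} governs. Squaring \eqref{eq3} simultaneously removes the ambiguous sign $\pm$ and the radical, giving
\[
\sinh^2\phi_0\bigl[-\dot u^2 - 2cx_1'(u)\dot u\dot v - (c^2+x_n^2(u))\dot v^2\bigr] = \bigl(\dot u + cx_1'(u)\dot v\bigr)^2.
\]
Expanding the right--hand side, transferring everything to one side, and regrouping the coefficients of $\dot u^2$, $\dot u\dot v$ and $\dot v^2$ yields $1+\sinh^2\phi_0 = \cosh^2\phi_0$ for $\dot u^2$, then $2cx_1'(u)(1+\sinh^2\phi_0) = 2c\cosh^2\phi_0\,x_1'(u)$ for $\dot u\dot v$, and $c^2x_1'^2(u) + \sinh^2\phi_0(c^2+x_n^2(u))$ for $\dot v^2$; this is exactly \eqref{lem2eq1}. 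Case (ii), with a timelike meridian ($\varepsilon = -1$), proceeds identically from \eqref{eq4}: after squaring and expanding $(-\dot u + cx_1'(u)\dot v)^2$, the $\dot u^2$ coefficient reduces to $\cosh^2\phi_0 - 1 = \sinh^2\phi_0$, the $\dot u\dot v$ coefficient to $2c\sinh^2\phi_0\,x_1'(u)$, and the $\dot v^2$ coefficient to $-\bigl(\cosh^2\phi_0(c^2+x_n^2(u)) + c^2x_1'^2(u)\bigr)$; multiplying through by $-1$ to normalize the $\dot v^2$ term positive produces \eqref{lem2eq2}.

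The converse is read off by reversing these equivalences, and this is the step I expect to require the most care. Recovering \eqref{eq3} or \eqref{eq4} from the quadratic means extracting a square root, which is legitimate only when the quantity under the radical is positive; but that positivity is precisely the timelikeness inequality \eqref{type2eq2} already assumed for $\alpha_2$. Granting it, the square root returns $\sinh\phi_0 = \pm(\cdots)$ in case (i) (the sign matching the $\pm$ inherent in \eqref{eq3}) and $\cosh\phi_0 = (\cdots)$ in case (ii), so the recovered $\phi_0$ is a genuine constant Lorentzian angle and $\alpha_2$ is a timelike loxodrome. Finally, the standing hypothesis $\dot u \neq 0$ guarantees that the curve actually crosses the meridians rather than running along a parallel, and it is what will later allow each equation to be rewritten as a first--order ordinary differential equation for $v(u)$ in the subsequent parametrization theorem.
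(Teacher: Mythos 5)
Your proposal is correct and takes essentially the same route as the paper, whose entire justification is ``after a simple calculation in equations \eqref{eq3} and \eqref{eq4}'' --- i.e.\ precisely the squaring-and-regrouping you carry out, with your discussion of the converse (using the timelikeness inequality \eqref{type2eq2} to legitimize undoing the square) filling in what the paper leaves implicit. One bookkeeping slip: in case (ii), with the convention you are using, the $\dot{u}\dot{v}$ coefficient before multiplying by $-1$ is $-2c\sinh^2{\phi_0}\,x_1'(u)$ rather than $+2c\sinh^2{\phi_0}\,x_1'(u)$; with that sign corrected, multiplying through by $-1$ gives exactly \eqref{lem2eq2}.
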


\begin{Theorem}
\label{thmII}
A timelike loxodrome on a timelike helicoidal surface of type II in 
$\mathbb{E}^n_1$ defined by \eqref{typeII} is parametrized by $\alpha_2(u)=H_2(u,v(u))$
where $v(u)$ is given by one of the following functions:
\begin{itemize}
    \item [(i.)] 
    $\displaystyle{v(u)=\int_{u_0}^{u}
    \frac{-2c\cosh^2{\phi_0}x_1'(\xi)
    \pm\sqrt{\sinh^2{2\phi_0}(c^2(x_1'^2(\xi)-1)-x_n^2(\xi))}}
    {2\sinh^2\phi_0(x_n^2(\xi)+c^2)+2c^2x_1'^2(\xi)}d\xi}$,
    
    \item [(ii.)] 
    $\displaystyle{v(u)=\int_{u_0}^{u}
    \frac{-2c\sinh^2{\phi_0}x_1'(\xi)
    \pm\sqrt{\sinh^2{2\phi_0}(x_n^2(\xi)+c^2(x_1'^2(\xi)+1))}}
    {2\cosh^2\phi_0(x_n^2(\xi)+c^2)+2c^2x_1'^2(\xi)}d\xi}$ 
\end{itemize}
where $\phi_0$ is a positive constant. 
\end{Theorem}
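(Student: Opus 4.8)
The plan is to mirror the proof of Theorem~\ref{thmI}: take the two quadratic relations supplied by Lemma~\ref{lemII} and solve each of them for the slope $dv/du$ of the loxodrome. Since we assume $\dot u\neq 0$, I would first divide \eqref{lem2eq1} and \eqref{lem2eq2} through by $\dot u^2$, turning each into a genuine quadratic equation in the single unknown $w=dv/du=\dot v/\dot u$. For a spacelike meridian this reads
\[
(\sinh^2\phi_0(x_n^2+c^2)+c^2x_1'^2)\,w^2+2c\cosh^2\phi_0\,x_1'\,w+\cosh^2\phi_0=0,
\]
and for a timelike meridian the analogous equation holds with $\sinh$ and $\cosh$ interchanged in two of the three coefficients and the constant term negated.

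The core of the argument is then just the quadratic formula, and the only real work lies in simplifying the discriminant; I expect this algebraic collapse to be the main (though routine) obstacle. In the spacelike case, factoring $4\cosh^2\phi_0$ out of the discriminant and applying $\cosh^2\phi_0-1=\sinh^2\phi_0$ reduces the bracket to $\sinh^2\phi_0\bigl(c^2(x_1'^2-1)-x_n^2\bigr)$; the identity $4\sinh^2\phi_0\cosh^2\phi_0=\sinh^2(2\phi_0)$ then delivers precisely the radicand $\sinh^2 2\phi_0\bigl(c^2(x_1'^2-1)-x_n^2\bigr)$ of part~(i). Running the same computation on the timelike-meridian equation, now extracting $4\sinh^2\phi_0$ and using $\sinh^2\phi_0+1=\cosh^2\phi_0$, produces the radicand $\sinh^2 2\phi_0\bigl(x_n^2+c^2(x_1'^2+1)\bigr)$ of part~(ii).

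A feature I would highlight, and the reason this statement is cleaner than Theorem~\ref{thmI}, is that the leading coefficients here never vanish. In the spacelike case $\sinh^2\phi_0(x_n^2+c^2)+c^2x_1'^2>0$ because $\phi_0>0$ gives $\sinh^2\phi_0>0$ while $x_n^2+c^2\geq c^2>0$; in the timelike case $\cosh^2\phi_0\geq 1$ makes positivity immediate. Hence there is no degenerate branch to peel off separately (as there was in Theorem~\ref{thmI} when its leading coefficient vanished), and both quadratics retain the honest two-root structure encoded by the $\pm$ signs.

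Finally, with $dv/du$ expressed explicitly as a function of $u$ on each branch, I would integrate from a base point $u_0$ to obtain $v(u)$, and write the loxodrome as $\alpha_2(u)=H_2(u,v(u))$. Beyond the discriminant simplification I anticipate no genuine difficulty; the remaining care is in pairing each root with the correct sign of the square root and in checking that the radicands are nonnegative so the integrals are real. For the spacelike-meridian branch this nonnegativity is exactly the timelike-surface inequality $c^2(x_1'^2-1)-x_n^2>0$, which is what $c^2(\varepsilon-x_1'^2)+\varepsilon x_n^2<0$ becomes at $\varepsilon=1$; for the timelike-meridian branch the quantity $x_n^2+c^2(x_1'^2+1)$ is positive automatically.
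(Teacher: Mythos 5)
Your proposal is correct and takes essentially the same route as the paper's proof: reduce each equation of Lemma \ref{lemII} to a quadratic in $dv/du$, observe that the leading coefficients $\sinh^2\phi_0(x_n^2+c^2)+c^2x_1'^2$ and $\cosh^2\phi_0(x_n^2+c^2)+c^2x_1'^2$ never vanish (so, unlike Theorem \ref{thmI}, no degenerate branch arises), apply the quadratic formula, and integrate. Your discriminant simplifications and the identification of the radicand's sign with the timelike-surface condition are exactly right; the paper merely leaves these routine computations implicit.
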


\begin{proof}
Assume that $M_2$ is a timelike helicoidal surface in $\mathbb{E}^n_1$ defined by \eqref{typeII} and $\alpha_2(t)=H_2(u(t),v(t))$ is a timelike 
loxodrome on $M_2$ in $\mathbb{E}^n_1$. From Lemma \ref{lemII},
we have the equations \eqref{lem2eq1} and \eqref{lem2eq2}. 

For a spacelike meridian, the equation \eqref{lem2eq1} implies  
\begin{equation}
\label{p2eq1}
 (\sinh^2{\phi_0}(x_n^2(u)+c^2)+c^2x_1'^2(u))
 \left(\frac{dv}{du}\right)^2
    +2c\cosh^2{\phi_0}x_1'(u)\frac{dv}{du}+\cosh^2{\phi_0}=0
\end{equation}
Since $\sinh^2{\phi_0}(x_n^2(u)+c^2)+c^2x_1'^2(u)\neq 0$ for all $u\in I\subset\mathbb{R}$, 
it can be easily obtained that the solution $v(u)$ of the differential equation \eqref{p2eq1} is given by the integral in (i).

Similarly, for a timelike meridian, the equation \eqref{lem2eq2} implies  
\begin{equation}
\label{p2eq2}
   (\cosh^2{\phi_0}(x_n^2(u)+c^2)+c^2x_1'^2(u))
   \left(\frac{dv}{du}\right)^2
   +2c\sinh^2{\phi_0}x_1'(u)\frac{dv}{du}-\sinh^2{\phi_0}=0
\end{equation}
Due to $\cosh^2{\phi_0}(x_n^2(u)+c^2)+c^2x_1'^2(u)\neq 0$, 
the solution $v(u)$ of the differential equation \eqref{p2eq2} is given by the integral in (ii). 
Thus, we get a parametrization of the loxodrome with respect to $u$ parameter such that 
$\alpha_2(u)=H_2(u, v(u))$ where $v(u)$ is defined by 
one of the integrals in (i) and (ii). \qed
\end{proof}

Now, we consider a timelike right helicoidal surface of type II in $\mathbb{E}^n_1$ denoted by $M_{2}^{R}$, that is, 
\begin{equation}
\label{right2}
    H_{2}^{R}(u,v)=(x_{1_0}+cv, x_2(u), \dots, x_{n-2}(u), x_n(u)\sinh{v}, x_n(u)\cosh{v})
\end{equation}
where $c\neq 0$ and $x_{1_0}$ are constants. 
Since $M_{2}^{R}$ is a timelike surface in $\mathbb{E}^n_1$, we have $\varepsilon(c^2+x_n^2(u))<0$.
This equality can be only satisfied when $\varepsilon=-1$. 
Thus, the meridian curve of $M_{2}^{R}$ must be timelike. 
Then, from the equations in (ii) of Theorem \ref{thmII}, we give the following corollary. 

\begin{Corollary}
\label{CorII}
A timelike loxodrome on a timelike right helicoidal surface of type II in $\mathbb{E}^n_1$ 
defined by \eqref{right2} is
parametrized by $\alpha_2^R(u)=H_{2}^{R}(u,v(u))$
where $v(u)$ is given by 
\begin{equation}
    v(u)=\pm\tanh{\phi_0}\int_{u_0}^u\frac{d\xi}
    {\sqrt{x_n^2(\xi)+c^2}}
\end{equation}
and $c, \phi_0>0$ are constants. 
\end{Corollary}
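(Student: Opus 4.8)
The plan is to obtain this as a direct specialization of part (ii) of Theorem \ref{thmII}. First I would record the two structural features of the right helicoidal surface of type II. By definition $x_1$ is a constant function, so $x_1'(u)\equiv 0$. Moreover, as observed just before the statement, the timelikeness condition $\varepsilon(c^2+x_n^2(u))<0$ together with $c^2+x_n^2(u)>0$ forces $\varepsilon=-1$; hence the meridian of $M_2^R$ is timelike and the relevant parametrization is the one supplied by Theorem \ref{thmII}(ii), not (i).

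Next I would substitute $x_1'(\xi)=0$ directly into the integrand of Theorem \ref{thmII}(ii). The term $-2c\sinh^2\phi_0\,x_1'(\xi)$ in the numerator and the term $2c^2x_1'^2(\xi)$ in the denominator both vanish, while inside the square root $x_n^2(\xi)+c^2(x_1'^2(\xi)+1)$ collapses to $x_n^2(\xi)+c^2$. This leaves
\begin{equation*}
v(u)=\int_{u_0}^{u}\frac{\pm\sqrt{\sinh^2(2\phi_0)\,(x_n^2(\xi)+c^2)}}{2\cosh^2\phi_0\,(x_n^2(\xi)+c^2)}\,d\xi .
\end{equation*}

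The only remaining step is algebraic simplification. Invoking the double-angle identity $\sinh(2\phi_0)=2\sinh\phi_0\cosh\phi_0$, and using $\phi_0>0$ so that $\sinh\phi_0,\cosh\phi_0>0$, the square root in the numerator equals $2\sinh\phi_0\cosh\phi_0\,\sqrt{x_n^2(\xi)+c^2}$. Cancelling one factor of $\sqrt{x_n^2(\xi)+c^2}$ against the denominator and one factor of $\cosh\phi_0$ yields the integrand $\pm\tanh\phi_0/\sqrt{x_n^2(\xi)+c^2}$, which is exactly the claimed formula. I do not expect a genuine obstacle here, since the content is purely a specialization; the one point deserving care is the sign bookkeeping in extracting $\sqrt{\sinh^2(2\phi_0)}=2\sinh\phi_0\cosh\phi_0$, which is legitimate precisely because $\phi_0>0$ guarantees the positivity of both hyperbolic factors.
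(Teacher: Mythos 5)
Your proposal is correct and follows the paper's own route exactly: the paper likewise notes that $x_1'\equiv 0$ and that the timelikeness condition $\varepsilon(c^2+x_n^2(u))<0$ forces $\varepsilon=-1$, and then obtains the corollary by specializing Theorem \ref{thmII}(ii), with the same simplification via $\sinh(2\phi_0)=2\sinh\phi_0\cosh\phi_0$. Your write-up is in fact more explicit than the paper, which leaves the substitution and algebra to the reader.
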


Using the equation \eqref{length} and Corollary \ref{CorII}, we give the following statement:
\begin{Corollary}
The length of a timelike loxodrome on a timelike right helicoidal surface of type II in $\mathbb{E}^n_1$ defined by \eqref{right2} between two points $u_0$ and $u_1$ is given by 
\begin{equation}
 \ell=\left|\frac{u_1-u_0}{\cosh{\phi_0}}\right|
\end{equation}
where $\phi_0$ is a nonnnegative constant.
\end{Corollary}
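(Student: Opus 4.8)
The plan is to evaluate the arclength integral \eqref{length} directly, exploiting the fact that the right helicoidal surface \eqref{right2} has constant first coordinate $x_1\equiv x_{1_0}$. First I would record the coefficients of the induced metric $g_2$ on $M_2^R$: since $x_1'(u)=0$, the general metric $g_2=\varepsilon\,du^2+2cx_1'(u)\,du\,dv+(c^2+x_n^2(u))\,dv^2$ collapses to $g_2=\varepsilon\,du^2+(c^2+x_n^2(u))\,dv^2$, so that $E=\varepsilon$, $F=0$ and $G=c^2+x_n^2(u)$. Because $M_2^R$ is a timelike surface, the sign condition $\varepsilon(c^2+x_n^2(u))<0$ forces $\varepsilon=-1$, exactly as observed before Corollary \ref{CorII}; hence $E=-1$.

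Next I would substitute the loxodrome into these data. By Corollary \ref{CorII} the curve is $\alpha_2^R(u)=H_2^R(u,v(u))$ with
$$\frac{dv}{du}=\pm\frac{\tanh\phi_0}{\sqrt{x_n^2(u)+c^2}}.$$
The key --- and essentially the only --- step is then that
$$G\left(\frac{dv}{du}\right)^2=\bigl(c^2+x_n^2(u)\bigr)\cdot\frac{\tanh^2\phi_0}{x_n^2(u)+c^2}=\tanh^2\phi_0,$$
since the factor $c^2+x_n^2(u)$ cancels exactly against the denominator supplied by $dv/du$. This cancellation, engineered precisely by the form of the loxodrome in Corollary \ref{CorII}, removes all $u$-dependence from the integrand. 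Plugging $E=-1$ and $F=0$ into \eqref{length} and using the identity $1-\tanh^2\phi_0=1/\cosh^2\phi_0$ gives
$$E+2F\frac{dv}{du}+G\left(\frac{dv}{du}\right)^2=-1+\tanh^2\phi_0=-\frac{1}{\cosh^2\phi_0}.$$

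Finally, taking the absolute value and the square root (recall $\cosh\phi_0>0$) yields the constant integrand $1/\cosh\phi_0$ in \eqref{length}. Integrating over $[u_0,u_1]$ then gives
$$\int_{u_0}^{u_1}\frac{du}{\cosh\phi_0}=\frac{u_1-u_0}{\cosh\phi_0},$$
and since length is nonnegative this is recorded as $\ell=\left|\dfrac{u_1-u_0}{\cosh\phi_0}\right|$, the claimed formula. I expect no genuine obstacle here: once the metric coefficients are read off correctly and the loxodrome derivative is substituted, the argument reduces to a one-line simplification. The only point requiring slight care is the sign bookkeeping beneath the square root in \eqref{length}, but since the quantity there is the manifestly nonpositive constant $-1/\cosh^2\phi_0$, the absolute value strips the sign cleanly and no case analysis is needed.
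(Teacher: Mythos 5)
Your proposal is correct and is precisely the computation the paper alludes to when it states the corollary ``using the equation \eqref{length} and Corollary \ref{CorII}'': substituting $E=-1$, $F=0$, $G=c^2+x_n^2(u)$ and $dv/du=\pm\tanh\phi_0/\sqrt{x_n^2(u)+c^2}$ into the arclength integral, so that the integrand collapses to the constant $1/\cosh\phi_0$. Your sign bookkeeping (the timelike condition forcing $\varepsilon=-1$ and the integrand being $-1/\cosh^2\phi_0$ before taking the absolute value) matches the paper's setup exactly.
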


\section{Timelike Loxodrome on Timelike Helicoidal Surface of Type III in $\mathbb{E}^n_1$}
In this section, we determine the parametrization of timelike loxodrome on the timelike helicoidal surface of type III in $\mathbb{E}^n_1$ defined by \eqref{typeIII}. 

Consider the timelike helicoidal surface of type III, $M_3$, in $\mathbb{E}^n_1$ given by \eqref{typeIII}. 
The induced metric $g_3$ on $M_3$ is defined by 
\begin{equation}
 g_2=\varepsilon du^2-2cx_n'(u)dudv+2x_n^2(u)dv^2.  
\end{equation}
Since $M_3$ is a timelike surface in $\mathbb{E}^n_1$, 
we have $2\varepsilon x_n^2(u)-c^2x_n'^2(u)<0$. 
Assume that $\alpha_3(t)=H_3(u(t),v(t))$ is a timelike loxodrome on $M_3$ in $\mathbb{E}^n_1$, that is,
$\alpha_3(t)$ intersects the meridian $m_3(u)=H_3(u,v_0)$ for a constant $v_0$ with a constant angle $\phi_0$ at the point $p\in M_3$. 
Then, we have 
\begin{align}
\label{type3eq1}
    &\tilde{g}(\dot{\alpha}_3(t),(m_3)_u)=
    \varepsilon\frac{du}{dt}-cx_n'(u)\frac{dv}{dt},\\
\label{type3eq2}    
    &\varepsilon\left(\frac{du}{dt}\right)^2-2cx_n'(u)\frac{du}{dt}\frac{dv}{dt}+2x_n^2(u)\left(\frac{dv}{dt}\right)^2<0.
\end{align}
In this context, there are two following cases occur with respect to the causal character of the meridian curve $m_3(u)$.\\
\textit{Case i.} $M_3$ has a spacelike meridian curve $m_3(u)$, that is, $\varepsilon=1$.  
Using the equations \eqref{type3eq1} and \eqref{type3eq2} in \eqref{ang3}, we get
\begin{equation}
\label{eq5}
    \sinh{\phi_0}=\pm\frac{\frac{du}{dt}-cx_n'(u)\frac{dv}{dt}}
    {\sqrt{-\left(\frac{du}{dt}\right)^2+2cx_n'(u)\frac{du}{dt}\frac{dv}{dt}-2x_n^2(u)\left(\frac{dv}{dt}\right)^2}}.
\end{equation}
\\
\textit{Case ii.} $M_3$ has a timelike meridian curve $m_3(u)$, that is, $\varepsilon=-1$. 
Using the equations \eqref{type3eq1} and \eqref{type3eq2} in \eqref{ang2}, we obtain 
\begin{equation}
\label{eq6}
    \cosh{\phi_0}=-\frac{\frac{du}{dt}+cx_n'(u)\frac{dv}{dt}}
    {\sqrt{\left(\frac{du}{dt}\right)^2+2cx_n'(u)\frac{du}{dt}
    \frac{dv}{dt}-2x_n^2(u)\left(\frac{dv}{dt}\right)^2}}.
\end{equation}
After a simple calculation in the equations \eqref{eq5} and \eqref{eq6}, 
we get the following lemma. 
\begin{Lemma}
\label{lemIII}
Let $M_3$ be a timelike helicoidal surface of type III in $\mathbb{E}^n_1$ defined by \eqref{typeIII}. Then,
$\alpha_3(t)=H_3(u(t),v(t))$ is a timelike loxodrome 
with $\dot{u}\neq 0$ if and only if one of the following differential equations is satisfied:
\begin{itemize}
    \item [(i.)] for a spacelike meridian,
    \begin{equation}
    \label{lem3eq1}
    (2\sinh^2{\phi_0}x_n^2(u)+c^2x_n'^2(u))\dot{v}^2
    -2c\cosh^2{\phi_0}x_n'(u)\dot{u}\dot{v}+\cosh^2{\phi_0}\dot{u}^2=0
    \end{equation}
    
    \item [(ii.)] for a timelike meridian,
    \begin{equation}
    \label{lem3eq2}
    (2\cosh^2{\phi_0}x_n^2(u)+c^2x_n'^2(u))\dot{v}^2
    -2c\sinh^2{\phi_0}x_n'(u)\dot{u}\dot{v}-\sinh^2{\phi_0}\dot{u}^2=0
    \end{equation}
\end{itemize}
 where $\phi_0$ is a positive constant and $.$ 
 denotes the derivative of any function respect to $t$. 
\end{Lemma}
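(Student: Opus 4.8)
The plan is to start from the two angle relations \eqref{eq5} and \eqref{eq6} that have already been set up for the two causal cases, and to turn each into the claimed homogeneous quadratic in $\dot u,\dot v$ purely by squaring and collecting terms with the identity $\cosh^2\phi_0-\sinh^2\phi_0=1$. Before squaring I would first record why each Lorentzian--angle definition is the correct one. Along a spacelike meridian ($\varepsilon=1$) the meridian tangent $(m_3)_u$ is spacelike, since its self--product is $E=\varepsilon=1$, while $\dot\alpha_3$ is timelike by hypothesis; hence the mixed formula \eqref{ang3} applies, and substituting $\tilde g(\dot\alpha_3,(m_3)_u)=\dot u-cx_n'(u)\dot v$ together with $\|\dot\alpha_3\|=\sqrt{-\langle\dot\alpha_3,\dot\alpha_3\rangle}$ yields \eqref{eq5}, the sign $\pm$ being inherited directly from \eqref{ang3}. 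Along a timelike meridian ($\varepsilon=-1$) both $(m_3)_u$ and $\dot\alpha_3$ are timelike, so \eqref{ang2} is the relevant definition; here $\tilde g(\dot\alpha_3,(m_3)_u)=-\dot u-cx_n'(u)\dot v$, and dividing by $\|(m_3)_u\|\,\|\dot\alpha_3\|$ gives \eqref{eq6}, the leading minus sign merely propagating from $\varepsilon=-1$. In both radicands the quantity $-\langle\dot\alpha_3,\dot\alpha_3\rangle$ is positive precisely because $\dot\alpha_3$ is timelike, so the expressions are well defined.

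For part (i), I would clear the denominator in \eqref{eq5} and square, reaching
\[
(\dot u-cx_n'(u)\dot v)^2=\sinh^2\phi_0\bigl(-\dot u^2+2cx_n'(u)\dot u\dot v-2x_n^2(u)\dot v^2\bigr).
\]
Expanding and transposing everything to one side, the coefficient of $\dot u^2$ collapses to $1+\sinh^2\phi_0=\cosh^2\phi_0$, the coefficient of $\dot u\dot v$ to $-2cx_n'(u)(1+\sinh^2\phi_0)=-2c\cosh^2\phi_0\,x_n'(u)$, and the coefficient of $\dot v^2$ to $c^2x_n'^2(u)+2\sinh^2\phi_0\,x_n^2(u)$; this is exactly \eqref{lem3eq1}. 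For part (ii) the identical procedure applied to \eqref{eq6}, now using $\cosh^2\phi_0-1=\sinh^2\phi_0$ for the $\dot u^2$ and $\dot u\dot v$ coefficients, produces \eqref{lem3eq2} after a global multiplication by $-1$. This is the same bookkeeping already carried out for the type I and type II surfaces in Lemmas \ref{lemI} and \ref{lemII}, only with $E=\varepsilon$, $F=-cx_n'(u)$, $G=2x_n^2(u)$ read off from $g_3$.

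The one point needing care is the equivalence, since squaring can in principle introduce spurious solutions. Here it does not, and this is what I expect to be the only genuine obstacle worth spelling out. The factor $\pm$ in \eqref{eq5} and the fixed sign in \eqref{eq6} already encode both admissible orientations of $\dot\alpha_3$ against the meridian, and $\phi_0$ is a prescribed nonnegative constant with $\cosh\phi_0\ge 1>0$; consequently the squared polynomial relation is genuinely equivalent to the signed angle equation, not merely implied by it. Reading the argument in reverse, a curve with $\dot u\ne 0$ satisfying \eqref{lem3eq1} or \eqref{lem3eq2} determines the ratio $\dot v/\dot u$, hence fixes the right--hand side of \eqref{eq5} or \eqref{eq6} up to sign, hence a constant $\phi_0$, which is precisely the timelike loxodrome condition. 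This closes both directions and completes the lemma.
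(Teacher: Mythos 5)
Your proposal is correct and follows essentially the same route as the paper, which obtains Lemma \ref{lemIII} precisely by squaring the angle relations \eqref{eq5} and \eqref{eq6} and collecting terms via $\cosh^2\phi_0-\sinh^2\phi_0=1$ (the paper compresses this to ``after a simple calculation''); your expanded coefficients match \eqref{lem3eq1} and \eqref{lem3eq2} exactly. Your extra discussion of why squaring introduces no spurious solutions is a welcome addition that the paper omits entirely.
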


\begin{Theorem}
\label{thmIII}
A timelike loxodrome on a timelike helicoidal surface of type III in $\mathbb{E}^n_1$ defined by \eqref{typeIII} is parametrized by $\alpha_3(u)=H_3(u,v(u))$
where $v(u)$ is given by one of the following functions:
\begin{itemize}
    \item [(i.)] 
    $\displaystyle{v(u)=\int_{u_0}^{u}
    \frac{2c\cosh^2{\phi_0}x_n'(\xi)
    \pm\sqrt{\sinh^2{2\phi_0}(c^2x_n'^2(\xi)-2x_n^2(\xi)))}}
    {4\sinh^2\phi_0 x_n^2(\xi)+2c^2x_n'^2(\xi)}d\xi}$,
    
    \item [(ii.)] 
    $\displaystyle{v(u)=\int_{u_0}^{u}
    \frac{2c\sinh^2{\phi_0}x_n'(\xi)
    \pm\sqrt{\sinh^2{2\phi_0}(2x_n^2(\xi)+c^2x_n'^2(\xi))}}
    {4\cosh^2\phi_0 x_n^2(\xi)+2c^2x_n'^2(\xi)}d\xi}$ 
\end{itemize}
where $\phi_0$ is a positive constant. 
\end{Theorem}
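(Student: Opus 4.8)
The plan is to mirror the proofs of Theorems \ref{thmI} and \ref{thmII}: I would start from the two ordinary differential equations supplied by Lemma \ref{lemIII}, namely \eqref{lem3eq1} for a spacelike meridian and \eqref{lem3eq2} for a timelike meridian, and solve each for $dv/du$ by the quadratic formula. Since we assume $\dot u\neq0$, I would divide each equation through by $\dot u^2$ and put $dv/du=\dot v/\dot u$, turning \eqref{lem3eq1} and \eqref{lem3eq2} into genuine quadratic equations in the single unknown $dv/du$, whose coefficients depend on $u$ only through $x_n(u)$ and $x_n'(u)$.

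For the spacelike meridian the quadratic has leading coefficient $A=2\sinh^2\phi_0\,x_n^2(u)+c^2x_n'^2(u)$, linear coefficient $-2c\cosh^2\phi_0\,x_n'(u)$ and constant term $\cosh^2\phi_0$. I would first observe that $A>0$: the timelike condition for $\varepsilon=1$ reads $2x_n^2(u)-c^2x_n'^2(u)<0$, which forces $x_n'(u)\neq0$ and hence $A\geq c^2x_n'^2(u)>0$. This is exactly what eliminates the degenerate linear subcase that appeared as items (i) and (ii) of Theorem \ref{thmI}, and explains why Theorem \ref{thmIII} has only two branches. The quadratic formula then gives $dv/du$ with discriminant $4c^2\cosh^4\phi_0\,x_n'^2-4\cosh^2\phi_0(2\sinh^2\phi_0\,x_n^2+c^2x_n'^2)$; applying $\cosh^2\phi_0-1=\sinh^2\phi_0$ and $4\sinh^2\phi_0\cosh^2\phi_0=\sinh^2 2\phi_0$ collapses this to $\sinh^2 2\phi_0\,(c^2x_n'^2(u)-2x_n^2(u))$, which is nonnegative precisely by the timelike condition. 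Integrating from $u_0$ to $u$ produces the formula in (i).

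The timelike meridian is handled in the same way: the quadratic has leading coefficient $2\cosh^2\phi_0\,x_n^2(u)+c^2x_n'^2(u)$, linear coefficient $-2c\sinh^2\phi_0\,x_n'(u)$ and constant term $-\sinh^2\phi_0$. Here the nondegeneracy condition for $\varepsilon=-1$ is $2x_n^2(u)+c^2x_n'^2(u)>0$, so the leading coefficient dominates this positive quantity (since $\cosh^2\phi_0\geq1$) and is again strictly positive. The identical discriminant computation, now using $\sinh^2\phi_0+1=\cosh^2\phi_0$, yields $\sinh^2 2\phi_0\,(2x_n^2(u)+c^2x_n'^2(u))$, which is automatically nonnegative, and integration gives (ii).

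The only step that needs genuine care is the algebraic reduction of the two discriminants: the hyperbolic identities have to be applied in the right order both to expose the common factor $\sinh^2 2\phi_0$ and to isolate the quantity $c^2x_n'^2-2x_n^2$ whose sign is then governed by the timelike hypothesis. Everything else is a routine transcription of the earlier arguments, so I expect no essential obstacle beyond this bookkeeping.
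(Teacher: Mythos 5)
Your proposal is correct and follows essentially the same route as the paper's proof: reduce the equations of Lemma \ref{lemIII} to quadratics in $dv/du$ (using $\dot u\neq 0$), apply the quadratic formula, and integrate, with the discriminants collapsing via $4\sinh^2\phi_0\cosh^2\phi_0=\sinh^2 2\phi_0$ to the expressions in (i) and (ii). If anything, your write-up is slightly more complete than the paper's, since you derive the strict positivity of the leading coefficients and the nonnegativity of the discriminants from the timelike-surface condition $2\varepsilon x_n^2-c^2x_n'^2<0$, where the paper merely asserts that the leading coefficients are nonzero.
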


\begin{proof}
Assume that $M_3$ is a timelike helicoidal surface in $\mathbb{E}^n_1$ defined by \eqref{typeIII} and $\alpha_3(t)=H_3(u(t),v(t))$ is a timelike 
loxodrome on $M_3$ in $\mathbb{E}^n_1$. From Lemma \ref{lemIII}, we have the equations \eqref{lem3eq1} and \eqref{lem3eq2}. 

For a spacelike meridian, the equation \eqref{lem3eq1} implies 
\begin{equation}
\label{p3eq1}
    (2\sinh^2{\phi_0}x_n^2(u)+c^2x_n'^2(u))
    \left(\frac{dv}{du}\right)^2-2c\cosh^2{\phi_0}x_n'(u)\frac{dv}{du}+\cosh^2{\phi_0}=0.
\end{equation}
Since $2\sinh^2{\phi_0}x_n^2(u)+c^2x_n'^2(u)\neq 0$ for all $u\in I\subset\mathbb{R}$, 
it can be easily obtained that the solution $v(u)$ of the differential equation \eqref{p3eq1} is given by the integral in (i).

Similarly, for a timelike meridian, the equation \eqref{lem3eq2} implies  
\begin{equation}
    \label{p3eq2}
    (2\cosh^2{\phi_0}x_n^2(u)+c^2x_n'^2(u))
    \left(\frac{dv}{du}\right)^2
    -2c\sinh^2{\phi_0}x_n'(u)\frac{dv}{du}-\sinh^2{\phi_0}=0.
\end{equation}
Due to $2\cosh^2{\phi_0}x_n^2(u)+c^2x_n'^2(u)\neq 0$, 
the solution $v(u)$ of the differential equation \eqref{p3eq2} is given by the integral in (ii). 
Thus, we get the parametrization of the loxodrome with respect to $u$ parameter such that 
$\alpha_3(u)=H_3(u, v(u))$ where $v(u)$ is defined by 
one of the integrals in (i) and (ii). \qed
\end{proof}

Note that the timelike right helicoidal surface with the timelike meridian does not exist.




\section{Visualization}
In this section, we give an example to visualize our main results.

\begin{example}

We consider the following spacelike profile curve:

\[\beta_I(u)=(x_1(u), 0, x_3(u),\dots, x_n(u)).\]

Then, we have the following parametrization of timelike helicoidal surface $M_{1}$:

\[H_1(u,v)=(x_1(u)\cos{v}, x_1(u)\sin{v}, x_3(u), \dots, x_{n-1}(u),  x_n(u)+cv).\]

By using Theorem 3.2 (i), we have  $v(u)=\pm\frac{1}{2c\sinh{\phi_0}}\int_{u_0}^{u}
    \frac{d\xi}{\sqrt{1-x_1^2(\xi)}}$.

If we choose $x_1(\xi)=k\sin{\xi}$ for $0<k<1$, then $v(u)=\pm\frac{1}{2c\sinh{\phi_0}}\int_{u_0}^{u}
    \frac{d\xi}{\sqrt{1-k^2\sin^2{\xi}}}=F(u,k)$ is an elliptic integral of first kind (see \cite{BF}).
    
Then, the parametrization of timelike loxodrome on timelike helicoidal surface $M_{1}$ in 
Minkowski $n$--space is given by

\[\alpha_1(u)=(x_1(u)\cos{v(u)}, x_1(u)\sin{v(u)}, x_3(u), \dots, x_{n-1}(u),  x_n(u)+cv(u)),\]

where $v(u)=F(u,k)$ for $0<k<1$.

\end{example}

\section{Conclusion}
Loxodromes on various surfaces and hypersurfaces in different ambient spaces
have been studied and many significant results have been obtained, see \cite{COP, Yoon, MTV, BDJ, C, Noble, BY1} etc.
In this paper, we investigate the timelike loxodromes on Lorentzian helicoidal surfaces in 
Minkowski $n$--space which were constructed in \cite{BDG}, called type I, type II and type III. 
For this reason, we get the first order ordinary differential equations which determine 
the parametrizations of the timelike loxodromes on such helicoidal surfaces. 
Solving these equations, we obtain the explicit parametrizations of the such loxodromes
parametrized by the parameter of the profile curves of the helicoidal surfaces. 
It is known that a particular case of helicoidal surfaces is right helicoidal surfaces.
We observe that the Lorentzian right helicoidal surfaces appear only for the 
Lorentzian helicoidal surfaces of type I having spacelike meridians and 
the Lorentzian helicoidal surfaces of type II having timelike meridians. 
Hence, we look the parametrizations for timelike loxodromes on which 
the Lorentzian right helicoidal of $\mathbb{E}^n_1$ exist. 
Moreover, we find the lengths of such loxodromes which just depend on the points and the angle 
between the loxodromes and the meridians of the surfaces.
Finally, we give a theoretical example to give the concept of the loxodromes.
In \cite{BS}, the graphical examples of the loxodromes can be found for the 4--dimensional Minkowski space.
Hence, our results in this paper and \cite{BDG} and can be used as finding the parametrizations of 
spacelike and timelike loxodromes on the nondegenerate helicoidal surfaces in the Minkowski space with the dimension higher 
than four.

\end{document}